\documentclass[11pt]{article}%
\usepackage{mathrsfs}
\usepackage{bbm}
\usepackage{amsfonts}
\usepackage{amsmath,amssymb}
\usepackage{amsmath}
\usepackage{amssymb}
\usepackage{graphicx}%
\usepackage{shorttoc}
\setcounter{MaxMatrixCols}{30}

\providecommand{\U}[1]{\protect \rule{.1in}{.1in}}

\setlength{\hoffset}{-0.4mm} \setlength{\voffset}{-0.4mm}
\setlength{\textwidth}{160mm} \setlength{\textheight}{235mm}
\setlength{\topmargin}{0mm} \setlength{\oddsidemargin}{0mm}
\setlength{\evensidemargin}{0mm} \setlength \arraycolsep{1pt}
\setlength{\headsep}{0mm} \setlength{\headheight}{0mm}
\newtheorem{theorem}{Theorem}[section]

\newtheorem{definition}[theorem]{Definition}

\newtheorem{lemma}[theorem]{Lemma}

\newtheorem{remark}[theorem]{Remark}

\newenvironment{proof}[1][Proof]{\noindent \textbf{#1.} }{\  $\Box$}
\numberwithin{equation}{section}

\begin{document}

\title{\textbf{Continuous Ergodic Capacities}}
\author{ Yihao Sheng$^*$ and Yongsheng Song\thanks{Academy of Mathematics and Systems Science, Chinese Academy of Sciences, Beijing 100190, China, and
School of Mathematical Sciences, University of Chinese Academy of Sciences, Beijing 100049, China. E-mails: shengyihao@amss.ac.cn (Y. Sheng), yssong@amss.ac.cn (Y. Song).}  }

\date{\today}
\maketitle

\begin{abstract}
    The objective of this paper is to characterize the structure of the set $\Theta$ for a continuous ergodic upper probability $\mathbb{V}=\sup_{P\in\Theta}P$ (Theorem \ref {main result}): 
    \begin{itemize}
  \item[$\cdot$]  $\Theta$ contains a finite number of ergodic probabilities;  
  \item[$\cdot$] Any invariant probability in $\Theta$ is a convex combination of those ergodic ones in $\Theta$;  
  \item[$\cdot$]Any probability in $\Theta$ coincides with an invariant one in $\Theta$ on the invariant $\sigma$-algebra. 
  \end{itemize}
 The last property has already been obtained in \textsl{Cerreia-Vioglio, Maccheroni, and Marinacci} \cite{ergodictheorem},   which firstly studied the ergodicity of such capacities.  
 
 As an application of the characterization, we prove an ergodicity result (Theorem \ref {improve}), which improves the result in \cite{ergodictheorem} in the sense that the limit of the time mean of $\xi$ is bounded by the upper expectation $\sup_{P\in\Theta}E_P[\xi]$, instead of the Choquet integral. Generally, the former is strictly smaller.
\end{abstract}

\textbf{Key words}: Continuous capacities; Ergodicity.

\textbf{MSC-classification}: 28A12; 37A05.

\section{Introduction}

\quad \quad The notions of invariant and ergodic capacities were initiated by \textsl{Cerreia-Vioglio, Maccheroni, and Marinacci} \cite{ergodictheorem}, in which they gave the first Birkhoff's ergodic theorem for continuous invariant capacities. We state their main results below in the notations of this paper.

\emph{
Let $(\Omega,\mathcal{F}, T)$ be a measurable system, and let $\mathbb{V}=\sup_{P\in\Theta}P$ be a continuous upper probability.  If $\mathbb{V}$ is $T$-invariant, then for any $P\in\Theta$ and any bounded random variable $\xi$ on $\Omega$, one has
\begin{eqnarray}\nonumber
    \begin{split}
        \lim_{n\to\infty}\frac{1}{n}\sum_{k=0}^{n-1}\xi(T^{k}\omega) \ \ \mbox{exists}, \ \ P\mbox{-a.s.}
    \end{split}
\end{eqnarray}
If further assuming that $\mathbb{V}$ is ergodic, then for any $P\in\Theta$, one gets
\begin{eqnarray}\label{ergodicresult1}
    \begin{split}
        -C_{\mathbb{V}}[-\xi^{*}]\leq\lim_{n\to\infty}\frac{1}{n}\sum_{k=0}^{n-1}\xi(T^{k}\omega)\leq C_{\mathbb{V}}[\xi^{*}], \ \ P\mbox{-a.s.},
    \end{split}
\end{eqnarray}
where $\xi^{*}(\omega)=\limsup\limits_{n\to\infty}\frac{1}{n}\sum_{k=0}^{n-1}\xi(T^{k}\omega)$ and $C_{\mathbb{V}}$ represents the Choquet integral w.r.t $\mathbb{V}.$
}

The main purpose of this paper is to provide a characterization of the continuous ergodic capacity $\mathbb{V}$. Specifically, let $\Theta$ be the set of probabilities on $\Omega$ dominated by $\mathbb{V}$,
 and let $\Theta_{0}$ (resp. $\Theta_*$) be the subset of $T$-invariant (resp. ergodic) probabilities in $\Theta$. Then, we get
\begin{itemize}
    \item  The cardinality of $\Theta_*$ is finite;
    \item $\Theta_0=\textmd{co}\ \Theta_*$, the convex hull of $\Theta_*$.
\end{itemize}

As a by-product, we improve the ergodic theorem (\ref{ergodicresult1}). Let $\mathbb{E}^0=\sup_{P\in\Theta_0}E_{P}.$ Then, for any $P\in\Theta$ and any bounded random variable $\xi$, we have
$$
    -\mathbb{E}^0[-\xi]\leq\lim_{n\to\infty}\frac{1}{n}\sum_{k=0}^{n-1}\xi(T^{k}\omega)\leq \mathbb{E}^0[\xi], \ \ P\mbox{-a.s.}
$$
 The limit is bounded by the upper expectation $\mathbb{E}^0[\xi]$, instead of the Choquet integral $C_{\mathbb{V}}[\xi^{*}].$
  
The rest of this paper is organized as follows. In Section 2, we present some notations of the continuous capacities. The main results are presented and proved in Section 3. In section 4, as an appendix, we give a brief introduction to Banach-Mazur limits \cite{infiniteanalysis}.

\section{Some Notations}
\
\
\
\
Let $(\Omega,\mathcal{F})$ be a measurable space, and let $T:\Omega\to\Omega$ be a measurable transformation. Denote by $\mathcal{I}$ the $T$-invariant algebra. Let $\mathbb{V}(A)=\sup_{P\in\Theta}P(A),$ $A\in\mathcal{F},$ be an upper probability, where $\Theta$ is a family of probabilities on $(\Omega,\mathcal{F}).$ Denote by $\mathcal{M} (\Omega)$ the collection of probabilities on $(\Omega,\mathcal{F}).$  In this paper, we assume that \[\Theta=\big\{ P\in \mathcal {M} (\Omega) \ | \ P(A)\le \mathbb{V}(A), \ A\in\mathcal{F}\big\}.\]

Let $\mathbb{E}[\xi]=\sup_{P\in\Theta}E_{P}[\xi],$ where $\xi$ is a $\mathcal{F}$-measurable function with $\sup_{P\in\Theta}E_{P}[|\xi|]<\infty.$ We call $\mathbb{E}$ the upper expectation with respect to $\Theta.$ 

\begin{definition}
    Let $\mathbb{V}(A)=\sup_{P\in\Theta}P(A)$, $A\in\mathcal{F},$ be an upper probability with respect to $\Theta.$ We say $\mathbb{V}$ is continuous if it continuous from above, namely, $\lim\limits_{n\to\infty}\mathbb{V}(A_{n})=0$ for $A_n\in \mathcal{F}$ with $A_{n}\downarrow \emptyset$.
\end{definition}

\begin{remark}\label{continuity of capacity} Let $A_{n}\in\mathcal{F}, n\ge 1$, be a sequence of disjoint sets. If $\mathbb{V}(A_{n})\ge\varepsilon$ for some $\varepsilon>0$ and all $n\ge 1$, then $\mathbb{V}$ cannot be continuous. In fact, $\mathop{\cup}\limits_{k\geq n}A_{k}\downarrow \varnothing,$ but $\mathbb{V}(\mathop{\cup}\limits_{k\geq n}A_{k})\geq\mathbb{V}(A_{n})\ge\varepsilon.$
\end{remark}

\begin{definition}
    Let $\mathbb{V}$ be a continuous upper probability on $(\Omega,\mathcal{F})$. We say $\mathbb{V}$ is $T$-invariant if for any $A\in\mathcal{F},$ we have \[\mathbb{V}(A)=\mathbb{V}(T^{-1}A).\]
\end{definition}

The following lemma is from \cite{ergodictheorem}. For the reader's convenience, we give a brief proof here.

\begin {lemma} \label {Lambda-inva} Assume that $\mathbb{V}(A)=\sup_{P\in\Theta}P(A)$ is $T$-invariant. Let $\Theta_0$ be the subset of $T$-invariant probabilities in $\Theta$. For each $P\in\Theta$, there exists $P'\in\Theta_0$ such that $P=P'$ on the $T$-invariant algebra $\mathcal{I}$.
\end {lemma}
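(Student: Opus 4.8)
The plan is to construct $P'$ by a Krylov--Bogolyubov type averaging, where the absence of any topology on $\Omega$ is compensated by a Banach--Mazur limit (see the appendix) together with the continuity of $\mathbb{V}$. Given $P\in\Theta$, for each $k\ge 0$ let $P\circ T^{-k}$ denote the pushforward $(P\circ T^{-k})(A)=P(T^{-k}A)$. Since $\mathbb{V}$ is $T$-invariant, one has $(P\circ T^{-k})(A)=P(T^{-k}A)\le\mathbb{V}(T^{-k}A)=\mathbb{V}(A)$, so each $P\circ T^{-k}\in\Theta$; and because $\Theta$ is convex, the Ces\`aro average
$$P_n(A)=\frac{1}{n}\sum_{k=0}^{n-1}P(T^{-k}A)$$
again belongs to $\Theta$ for every $n\ge 1$.

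Next I would fix a Banach--Mazur limit $\mathrm{LIM}$ and define, for $A\in\mathcal{F}$,
$$P'(A)=\mathrm{LIM}_n\, P_n(A),$$
the limit being applied to the bounded sequence $(P_n(A))_n\subset[0,1]$. Positivity, linearity and normalization of $\mathrm{LIM}$ make $P'$ a finitely additive set function with $P'(\Omega)=1$ and $P'\ge 0$. The crucial inequality is that, since each $P_n(A)\le\mathbb{V}(A)$ and $\mathrm{LIM}$ is monotone and fixes constant sequences,
$$P'(A)\le\mathbb{V}(A),\qquad A\in\mathcal{F}.$$

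The main obstacle is to upgrade finite additivity to countable additivity, which is precisely where a Banach--Mazur limit on its own would fail; here the continuity of $\mathbb{V}$ is what saves the argument. If $A_n\downarrow\varnothing$, then $P'(A_n)\le\mathbb{V}(A_n)\to 0$, so $P'$ is continuous at $\varnothing$, and a finitely additive probability continuous at $\varnothing$ is countably additive. Combined with $P'\le\mathbb{V}$, this yields $P'\in\Theta$. For $T$-invariance I would observe that
$$P_n(T^{-1}A)-P_n(A)=\frac{1}{n}\big(P(T^{-n}A)-P(A)\big)\to 0,$$
so by linearity of $\mathrm{LIM}$ and the fact that it annihilates null sequences, $P'(T^{-1}A)=P'(A)$; hence $P'\in\Theta_0$. (Equivalently, one may invoke the shift-invariance of $\mathrm{LIM}$ directly.)

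Finally, for $A\in\mathcal{I}$ one has $T^{-1}A=A$, hence $T^{-k}A=A$ for all $k$, so $P_n(A)=P(A)$ for every $n$ and therefore $P'(A)=\mathrm{LIM}_n\,P(A)=P(A)$. Thus $P=P'$ on $\mathcal{I}$, completing the construction. I expect the only genuinely delicate point to be the countable additivity step above; everything else is the routine bookkeeping of Ces\`aro averages and of the defining properties of $\mathrm{LIM}$.
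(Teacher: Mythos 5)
Your proposal is correct and follows essentially the same route as the paper's own proof: Ces\`aro averages $P_n$, a Banach--Mazur limit to define $P'$, the bound $P'\le\mathbb{V}$ via positivity, countable additivity from the continuity of $\mathbb{V}$, and $T$-invariance from the vanishing of $P_n(T^{-1}A)-P_n(A)$. The only cosmetic difference is that you invoke linearity plus the annihilation of null sequences where the paper cites its Remark \ref{Rem-banachmazur}; these are interchangeable.
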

\begin {proof}
For $P\in\Theta$, set $P_n(A)=\frac{1}{n}\sum_{k=0}^{n-1}P(T^{-k}A)$. Let $\Lambda$ be a Banach–Mazur limit, (see Definition \ref{banachmazur}) and set $P'(A)=\Lambda\big((P_n(A))\big)$, which is additive since $\Lambda$ is a linear functional. It follows from the positivity of $\Lambda$ that $P'(A)\le\Lambda\big((\mathbb{V}(A))\big)=\mathbb{V}(A)$. The continuity of $\mathbb{V}$ implies that $P'$ is a probability. So we get $P\in\Theta$. Noting that $P_n(A)=P(A)$ for any $A\in\mathcal{I}$ and $n\in\mathbb{N}$, we get $P'(A)=P(A)$ for each $A\in\mathcal{I}$. For any $A\in\mathcal{F}$, note  that $\limsup_{n}|P_n(A)-P_n(T^{-1}A)|=0$.  By Remark \ref {Rem-banachmazur}, we have\[P'(T^{-1}A)=\Lambda\big((P_n(T^{-1}A))\big)=\Lambda\big((P_n(A))\big)=P'(A),\] i.e., $P'$ is $T$-invariant.
\end {proof}

\begin{definition}
    Let $\mathbb{V}$ be a continuous upper probability on $(\Omega,\mathcal{F}),$ We say $\mathbb{V}$ is $T$-ergodic, if $\mathbb{V}(A)=0,$ or 1, for $A\in\mathcal{I}.$
\end{definition}

\section{Characterizations of Ergodic Upper Probabilities}

In this section, we establish a characterization for continuous ergodic upper probabilities (Theorem \ref {main result}).  As an application, we prove an ergodicity result (Theorem \ref {improve}), which improves the result in \cite{ergodictheorem}.

\begin {lemma} \label {0-1} Assume that $\mathbb{V}(A)=\sup_{P\in\Theta}P(A)$ is $T$-ergodic. Let $\Theta_0$ be the subset of $T$-invariant probabilities in $\Theta$. For $A\in \mathcal{I}$ with $\mathbb{V}(A)>0$, there exists $P'\in\Theta_0$ such that $P'(A)=1$.
\end {lemma}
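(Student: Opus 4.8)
The plan is to first use ergodicity to pin down the value of $\mathbb{V}(A)$, and then to show that this value is \emph{attained} by a single invariant probability, the attainment being the crux. Since $A\in\mathcal{I}$ and $\mathbb{V}$ is $T$-ergodic, the value $\mathbb{V}(A)$ lies in $\{0,1\}$; as $\mathbb{V}(A)>0$ by hypothesis, we get $\mathbb{V}(A)=1$. The goal is thus to produce $P'\in\Theta_0$ with $P'(A)=1$, i.e. to realize the supremum $\mathbb{V}(A)=\sup_{P\in\Theta}P(A)=1$ by a single invariant measure charging $A$ fully.

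Next I would transfer the problem from $\Theta$ to $\Theta_0$. By Lemma \ref{Lambda-inva}, every $P\in\Theta$ admits some $\tilde P\in\Theta_0$ agreeing with $P$ on $\mathcal{I}$; since $A\in\mathcal{I}$ this yields $\tilde P(A)=P(A)$, and combined with $\Theta_0\subseteq\Theta$ it gives $\sup_{P\in\Theta_0}P(A)=\sup_{P\in\Theta}P(A)=1$. I would then fix a maximizing sequence $P_m\in\Theta_0$ with $P_m(A)\to 1$.

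The main step is the attainment, for which I would mimic the Banach--Mazur construction of Lemma \ref{Lambda-inva}. Let $\Lambda$ be a Banach--Mazur limit and set $P'(B)=\Lambda\big((P_m(B))_m\big)$ for $B\in\mathcal{F}$. Linearity of $\Lambda$ makes $P'$ finitely additive, positivity together with $P_m(B)\le\mathbb{V}(B)$ gives $P'\le\mathbb{V}$, and because each $P_m$ is already $T$-invariant the two sequences $(P_m(T^{-1}B))_m$ and $(P_m(B))_m$ coincide, so $P'(T^{-1}B)=P'(B)$ and $P'$ is $T$-invariant. Moreover $\Lambda$ agrees with the ordinary limit on convergent sequences, whence $P'(A)=\lim_m P_m(A)=1$ and $P'(\Omega)=1$.

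Finally I would upgrade $P'$ from finitely additive to countably additive exactly as in Lemma \ref{Lambda-inva}: for $B_m\downarrow\varnothing$ one has $0\le P'(B_m)\le\mathbb{V}(B_m)\to 0$, so $P'$ is continuous at $\varnothing$ and hence a genuine probability. This places $P'\in\Theta_0$ with $P'(A)=1$, as required. I expect the only real obstacle to be the attainment of the supremum, since $\mathbb{V}$ is merely maxitive and nothing prevents $\mathbb{V}(A)=\mathbb{V}(A^{c})=1$; the device that overcomes it is the passage through the Banach--Mazur limit, whose output is only finitely additive but is rescued by the continuity from above of $\mathbb{V}$.
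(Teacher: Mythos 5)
Your proposal is correct and follows essentially the same route as the paper: ergodicity pins $\mathbb{V}(A)=1$, a Banach--Mazur limit applied to a maximizing sequence yields the attainment, continuity of $\mathbb{V}$ upgrades finite additivity to countable additivity, and Lemma \ref{Lambda-inva} supplies the invariance. The only (harmless) difference is the order of operations: the paper applies the Banach--Mazur limit to a maximizing sequence in $\Theta$ and afterwards invokes Lemma \ref{Lambda-inva} to pass to an invariant measure agreeing with it on $\mathcal{I}$, whereas you invoke Lemma \ref{Lambda-inva} first to move the supremum into $\Theta_0$ and then run the Banach--Mazur construction inside $\Theta_0$, which makes the invariance of $P'$ immediate.
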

\begin {proof}
For $A\in \mathcal{I}$ with $\mathbb{V}(A)>0$, there exists $P_n\in\Theta$ such that $P_n(A)\rightarrow1$. Let $\Lambda$ be a Banach–Mazur limit, and set $P(B)=\Lambda\big((P_n(B))\big), B\in\mathcal{F}$. By the properties of $\Lambda$ and the continuity of $\mathbb{V}$, we get that $P$ is a probability belonging to $\Theta$ with $P(A)=\lim_n P_n(A)=1$. It follows from Lemma \ref {Lambda-inva} that there exists $P'\in\Theta_0$ such that $P'(A)=1$.
\end {proof}

\begin {lemma}\label {ABC} Assume that $\mathbb{V}(A)=\sup_{P\in\Theta}P(A)$ is $T$-ergodic.   Let $\Theta_0$ be the subset of $T$-invariant probabilities in $\Theta$. For a $T$-ergodic probability $P^*$, if there exists $P\in\Theta_0$ such that $P^*$ is absolutely continuous with respect to $P$, we have $P^*\in \Theta_0$.
\end {lemma}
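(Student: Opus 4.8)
The plan is to reduce everything to showing that $P^{*}$ is dominated by $\mathbb{V}$. Since $P^{*}$ is $T$-ergodic it is in particular $T$-invariant, so once we prove $P^{*}(C)\le\mathbb{V}(C)$ for every $C\in\mathcal{F}$ we get $P^{*}\in\Theta$ and hence $P^{*}\in\Theta_{0}$. So I fix $C\in\mathcal{F}$, write $c=P^{*}(C)$, and consider the Birkhoff averages $A_{n}(\omega)=\frac{1}{n}\sum_{k=0}^{n-1}\mathbf 1_{C}(T^{k}\omega)$. The key object is the invariant set
\[
D=\big\{\omega:\ \lim_{n\to\infty}A_{n}(\omega)\text{ exists and equals }c\big\}.
\]
A short computation shows $A_{n}(T\omega)-A_{n}(\omega)=\tfrac{1}{n}\big(\mathbf 1_{C}(T^{n}\omega)-\mathbf 1_{C}(\omega)\big)\to 0$, so $D$ is genuinely $T$-invariant, $D\in\mathcal{I}$. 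Applying classical Birkhoff to the single invariant probability $P^{*}$ and using that $P^{*}$ is ergodic, $A_{n}\to E_{P^{*}}[\mathbf 1_{C}\mid\mathcal I]=c$ $\,P^{*}$-a.s., so $P^{*}(D)=1$.

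The next step is to upgrade this to $\mathbb{V}(D)=1$, and here the absolute continuity enters. Because $P^{*}(D)=1\neq 0$ and $P^{*}\ll P$, the set $D$ cannot be $P$-null, so $P(D)>0$ and therefore $\mathbb{V}(D)\ge P(D)>0$. Since $D\in\mathcal{I}$ and $\mathbb{V}$ is $T$-ergodic, this forces $\mathbb{V}(D)=1$. Now I want to turn this capacity statement into an honest measure in $\Theta$. For $\varepsilon>0$ set $D_{N}^{\varepsilon}=\{\omega:\ |A_{n}(\omega)-c|<\varepsilon\ \forall n\ge N\}$; these increase in $N$ to a set containing $D$. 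Using subadditivity together with continuity from above of $\mathbb{V}$ (so that $\mathbb{V}(E\setminus D_{N}^{\varepsilon})\to 0$ along $D_{N}^{\varepsilon}\uparrow E$), one gets $\mathbb{V}(D_{N}^{\varepsilon})\to 1$. Hence for a suitable $N$ we may pick $Q=Q_{\varepsilon}\in\Theta$ with $Q(D_{N}^{\varepsilon})>1-\varepsilon$.

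The final step is a one-line averaging estimate. On $D_{N}^{\varepsilon}$ we have in particular $A_{N}\ge c-\varepsilon$, and since $A_{N}\ge 0$ everywhere,
\[
E_{Q}[A_{N}]\ \ge\ \int_{D_{N}^{\varepsilon}}A_{N}\,dQ\ \ge\ (c-\varepsilon)\,Q(D_{N}^{\varepsilon})\ \ge\ (c-\varepsilon)(1-\varepsilon).
\]
On the other hand, using $Q\le\mathbb{V}$ and the $T$-invariance of $\mathbb{V}$,
\[
E_{Q}[A_{N}]=\frac{1}{N}\sum_{k=0}^{N-1}Q(T^{-k}C)\le\frac{1}{N}\sum_{k=0}^{N-1}\mathbb{V}(T^{-k}C)=\mathbb{V}(C).
\]
Combining the two bounds gives $(c-\varepsilon)(1-\varepsilon)\le\mathbb{V}(C)$, and letting $\varepsilon\to 0$ yields $P^{*}(C)=c\le\mathbb{V}(C)$. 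As $C$ was arbitrary, $P^{*}\in\Theta_{0}$.

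I expect the genuine content to sit in the middle step: the passage from $P^{*}(D)=1$ to $\mathbb{V}(D)=1$ and then to an actual dominated measure $Q\in\Theta$ witnessing it. This is precisely where the hypotheses are used in an essential way — absolute continuity $P^{*}\ll P$ guarantees $\mathbb{V}(D)>0$, ergodicity of $\mathbb{V}$ promotes this to $\mathbb{V}(D)=1$, and continuity of $\mathbb{V}$ lets us realize the capacity value $1$ by a genuine $Q\in\Theta$ concentrated (up to $\varepsilon$) on the set where the averages have stabilized near $c$. The surrounding Birkhoff convergence and the averaging inequality are routine; no use of Lemmas \ref{Lambda-inva} or \ref{0-1} seems necessary, as the argument works directly from the definitions of continuity, invariance, and ergodicity of $\mathbb{V}$.
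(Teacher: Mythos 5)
Your proof is correct, but it takes a genuinely different route from the paper's. The paper argues by contradiction on the invariant $\sigma$-algebra: assuming $P^*\notin\Theta_0$, it iterates Hahn decompositions of the signed measures $P^*-P_k$, with the $P_k$ chosen nearly optimally in nested classes $\Theta_{k}\subset\Theta_0$ that are kept nonempty by Lemma \ref{0-1} (which itself rests on Banach--Mazur limits via Lemma \ref{Lambda-inva}); this manufactures infinitely many disjoint invariant sets of positive capacity, ergodicity promotes each capacity to $1$, and Remark \ref{continuity of capacity} yields the contradiction with continuity. You instead prove the domination $P^*(C)\le\mathbb{V}(C)$ directly for every $C\in\mathcal{F}$: Birkhoff's theorem for the ergodic measure $P^*$ gives full $P^*$-measure to the strictly invariant set $D$ where the averages $A_n$ converge to $c=P^*(C)$; absolute continuity gives $\mathbb{V}(D)>0$, ergodicity of $\mathbb{V}$ gives $\mathbb{V}(D)=1$; continuity from above plus subadditivity of $\mathbb{V}$ produce an honest $Q\in\Theta$ with $Q(D_N^\varepsilon)>1-\varepsilon$; and the two-sided estimate $(c-\varepsilon)(1-\varepsilon)\le E_Q[A_N]\le\mathbb{V}(C)$ closes the argument (the lower bound needs $\varepsilon<c$, but the case $c\le\varepsilon$ is vacuous, so nothing is lost). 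Your route is more elementary and self-contained: it uses neither Banach--Mazur limits nor the Hahn-decomposition induction, and it replaces a pure contradiction by a quantitative inequality. What the paper's argument buys in exchange is that it operates entirely on $(\Omega,\mathcal{I})$ and reuses machinery (Lemma \ref{0-1}, Remark \ref{continuity of capacity}) that the paper needs again in the proof of Theorem \ref{main result}.

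One caveat, which you share with the paper rather than introduce: your final step $\frac{1}{N}\sum_{k=0}^{N-1}Q(T^{-k}C)\le\frac{1}{N}\sum_{k=0}^{N-1}\mathbb{V}(T^{-k}C)=\mathbb{V}(C)$ uses the $T$-invariance of $\mathbb{V}$, which the paper's definition of ergodicity does not literally include. This is not a gap relative to the paper: its own proof also requires invariance, since Lemma \ref{0-1} is proved by invoking Lemma \ref{Lambda-inva}, whose hypothesis is that $\mathbb{V}$ is $T$-invariant. Under the intended reading (as in \cite{ergodictheorem}) that an ergodic capacity is an invariant one satisfying the $0$--$1$ law on $\mathcal{I}$, both proofs are complete; under the literal Definition 2.5, both have the same missing hypothesis.
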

\begin {proof} For a $T$-ergodic probability $P^*$, set $\Theta_{P^*}=\{ P\in \Theta_0 \ | \ P^*\ll P\}$. Assume that $\Theta_{P^*}$ is not empty, but does not contain $P^*$. 

Two invariant probability measures coincide if and only if they are equal on the invariant $\sigma$-algebra $\mathcal{I}$. In the sequel, we shall consider the measurable space $(\Omega, \mathcal{I})$.  

Set $\beta_0=\sup_{P\in\Theta_0}(P^*-P)^+(\Omega)$, which is positive by the assumption $P^*\not\in \Theta_0$. Choose $P_0\in\Theta_0$ such that $(P^*-P_0)^+(\Omega)\ge \frac{1}{2}\beta_0$. Let $\Omega=D_0\cup D_0^c$ be the Hahn decomposition of the signed measure $P^*-P_0$ with \[(P^*-P_0)(D_0)=(P^*-P_0)^+(\Omega).\]
So we get $P^*(D_0)=1$ and $P_0(D_0^c)\ge \frac{1}{2}\beta_0$.

Let $\Theta_{1}=\{ P\in\Theta_0 \ | \ P(D_0)=1 \}$. Since $P^*(D_0)=1$ and $\Theta_{P^*}$ is not empty, there exists $P'\in\Theta_0$ such that $P'(D_0)>0$.  By Lemma \ref {0-1} we get that $\Theta_1$ is nonempty. Set $\beta_1=\sup_{P\in\Theta_1}(P^*-P)^+(\Omega)$, which is positive. Choose $P_1\in\Theta_1$ such that $(P^*-P_1)^+(\Omega)\ge \frac{1}{2}\beta_1$. Let $\Omega=D_1\cup D_1^c$ be the Hahn decomposition of $P^*-P_1$ with $D_1\subset D_0$ and \[(P^*-P_1)(D_1)=(P^*-P_1)^+(\Omega).\]
Then, we get $P^*(D_1)=1$ and $P_1(D_1^c\cap D_0)\ge \frac{1}{2}\beta_1$.

By induction, we can obtain $(\Theta_k, \beta_k, P_k, D_k)_{k\ge0}$ satisfying
\begin {eqnarray*}
& &\Theta_{k+1}=\{ P\in\Theta_k \ | \ P(D_k)=1 \} \neq\emptyset,\\
& &\beta_{k+1}=\sup_{P\in\Theta_{k+1}}(P^*-P)^+(\Omega)>0,\\
& &P_{k+1}\in\Theta_{k+1} \ \textit{with} \ (P^*-P_{k+1})^+(\Omega)\ge \frac{1}{2}\beta_{k+1},\\
& &\Omega=D_{k+1}\cup D_{k+1}^c \ \textit{being the Hahn decomposition of} \ P^*-P_{k+1} \ \textit{with} \\ 
& &D_{k+1}\subset D_k, \ P^*(D_{k+1})=1 \ \textit{and} \ P_{k+1}(D_{k+1}^c\cap D_k)\ge \frac{1}{2}\beta_{k+1}.
\end {eqnarray*}
Noting that $\{D_{k+1}^c\cap D_k\}_{k\ge0}$ is disjoint, and $\mathbb{V}(D_{k+1}^c\cap D_k)\ge P_{k+1}(D_{k+1}^c\cap D_k)\ge \frac{1}{2}\beta_{k+1}>0.$ Then $\mathbb{V}(D_{k+1}^c\cap D_k)=1,$ which is a contradiction by Remark \ref{continuity of capacity}.
\end {proof}

Now, we provide the following characterization of the continuous ergodic capacity $\mathbb{V}.$

\begin {theorem}\label{main result}Assume that $\mathbb{V}(A)=\sup_{P\in\Theta}P(A)$ is $T$-ergodic.  Let $\Theta_0$ (resp. $\Theta_*$) be the subset of $T$-invariant (resp. ergodic) probabilities in $\Theta$. Then, the cardinality of $\Theta_*$ is finite, and $\Theta_0=\emph{co}\ \Theta_*$, the convex hull of $\Theta_*$.
\end {theorem}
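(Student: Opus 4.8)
The plan is to prove the two inclusions of $\Theta_0=\mathrm{co}\,\Theta_*$ separately and to extract the finiteness of $\Theta_*$ from the continuity of $\mathbb{V}$ via the disjointness principle of Remark \ref{continuity of capacity}. The inclusion $\mathrm{co}\,\Theta_*\subseteq\Theta_0$ is the routine direction: since $\Theta$ consists of \emph{all} probabilities dominated by $\mathbb{V}$, it is convex, and a convex combination of $T$-invariant probabilities is again $T$-invariant. As every ergodic probability is invariant, any convex combination of elements of $\Theta_*$ lands in $\Theta_0$. The substance lies in the reverse inclusion together with the finiteness claim.

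For finiteness I would use that two distinct ergodic probabilities are mutually singular: if $P\ne Q$ are ergodic, pick $A$ with $P(A)\ne Q(A)$; by Birkhoff's theorem the time averages of $\mathbf{1}_A$ converge $P$-a.s.\ to $P(A)$ and $Q$-a.s.\ to $Q(A)$, so the invariant set on which the limit equals $P(A)$ separates them. Given any countable family of distinct $\{P_j\}\subseteq\Theta_*$, set $Q=\sum_j 2^{-j}P_j$ and $f_j=dP_j/dQ$; pairwise singularity forces $f_jf_k=0$ $Q$-a.s., so the sets $A_j=\{f_j>0\}$ are pairwise disjoint (after a null modification) with $P_j(A_j)=1$, whence $\mathbb{V}(A_j)\ge P_j(A_j)=1$. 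An infinite such family would produce infinitely many disjoint sets of $\mathbb{V}$-capacity $1$, contradicting Remark \ref{continuity of capacity}. Hence $\Theta_*$ is finite.

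For $\Theta_0\subseteq\mathrm{co}\,\Theta_*$ I would take $P\in\Theta_0$ and pass to its ergodic decomposition $P=\int_{\mathcal E}\mu\,d\rho(\mu)$, where $\mathcal E$ denotes the ergodic measures and $\omega\mapsto P_\omega$ the associated $\mathcal I$-measurable assignment of components. The key observation is that ergodicity of $\mathbb{V}$ upgrades mass to capacity: for any measurable $E\subseteq\mathcal E$ with $\rho(E)>0$, the set $F_E=\{\omega:P_\omega\in E\}$ is $T$-invariant and satisfies $\mathbb{V}(F_E)\ge P(F_E)=\rho(E)>0$, so $\mathbb{V}(F_E)=1$. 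If $\rho$ were not concentrated on finitely many atoms, one could choose infinitely many pairwise disjoint $E_j$ with $\rho(E_j)>0$ (using the atoms, or splitting a non-atomic part into countably many positive pieces), yielding pairwise disjoint invariant sets $F_{E_j}$ with $\mathbb{V}(F_{E_j})=1$ — again contradicting Remark \ref{continuity of capacity}. Therefore $\rho=\sum_{i=1}^{N}c_i\delta_{\mu_i}$ with $c_i>0$, $\sum_i c_i=1$, and distinct ergodic $\mu_i$. Since $P\ge c_i\mu_i$ gives $\mu_i\ll P$ with $P\in\Theta_0$, Lemma \ref{ABC} yields $\mu_i\in\Theta_0$, i.e.\ $\mu_i\in\Theta_*$; hence $P=\sum_i c_i\mu_i\in\mathrm{co}\,\Theta_*$.

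The main obstacle is exactly the control of the ergodic decomposition of a general $P\in\Theta_0$: a priori it may involve a continuum of ergodic components (as for an irrational-rotation skew product over Lebesgue measure), and an individual component need not be absolutely continuous with respect to $P$, so Lemma \ref{ABC} cannot be applied to it directly. The resolution is the two-step mechanism above — ergodicity of $\mathbb{V}$ turns every invariant set of positive $P$-mass into one of full capacity, and continuity of $\mathbb{V}$ (via Remark \ref{continuity of capacity}) forbids infinitely many disjoint such sets — which forces the decomposition to be finitely atomic; only then are the components absolutely continuous with respect to $P$ and amenable to Lemma \ref{ABC}. A secondary technical point is the measurability framework: invoking the ergodic decomposition theorem and the component map $\omega\mapsto P_\omega$ presupposes a standard Borel (or suitably countably generated) structure on $(\Omega,\mathcal F)$, so one should either assume this or replace the decomposition step by a hands-on recursive splitting of invariant sets in the spirit of the proof of Lemma \ref{ABC}, combined with Lemma \ref{0-1} to locate invariant measures in $\Theta_0$ on the relevant pieces.
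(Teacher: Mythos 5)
Your first two steps are fine: the inclusion $\mathrm{co}\,\Theta_*\subseteq\Theta_0$ is indeed routine (the paper does not even comment on it), and your finiteness argument --- pairwise mutual singularity of distinct ergodic measures, made into genuinely disjoint sets via $Q=\sum_j 2^{-j}P_j$ and the supports of the densities $dP_j/dQ$, then a contradiction with Remark \ref{continuity of capacity} --- is a correct and useful expansion of what the paper dismisses with ``follows directly from the continuity of $\mathbb{V}$.''

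The hard inclusion $\Theta_0\subseteq\mathrm{co}\,\Theta_*$ is where you diverge from the paper, and where the gap sits. The theorem is stated for an arbitrary measurable space $(\Omega,\mathcal{F})$, and the ergodic decomposition $P=\int\mu\,d\rho(\mu)$ together with the component map $\omega\mapsto P_\omega$ simply does not exist at that level of generality; it requires a standard Borel (or at least countably generated and countably separated) structure. So, as written, your argument proves the theorem only under an additional hypothesis. You flag this yourself, but the ``hands-on recursive splitting'' you defer to is not an optional variant --- it is the entire content of the paper's proof, and it is in fact shorter than the decomposition route. The paper takes $P\in\Theta_0$, picks $A\in\mathcal{I}$ with $0<P(A)<1$ (if none exists, $P$ is already ergodic), and recursively splits invariant sets into pairs of invariant subsets of positive $P$-measure. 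If this procedure never halted, one would obtain infinitely many pairwise disjoint invariant sets of positive $P$-measure; ergodicity of $\mathbb{V}$ makes each of them have capacity one, contradicting continuity via Remark \ref{continuity of capacity} --- exactly your ``mass upgrades to capacity'' mechanism, but applied directly to invariant subsets of $\Omega$ rather than to subsets of the space of ergodic measures. The procedure therefore yields a finite partition $\{B_k\}_{k=1}^n\subset\mathcal{I}$ with $P(B_k)>0$ and no $B_k$ admitting a nontrivial invariant decomposition; then $P_k=P(\cdot\,|\,B_k)$ is automatically invariant, ergodic, and absolutely continuous with respect to $P$, so Lemma \ref{ABC} gives $P_k\in\Theta_*$, and $P=\sum_k P(B_k)P_k$ finishes. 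Note that Lemma \ref{0-1} is not needed at this stage (it is used only inside the proof of Lemma \ref{ABC}); the splitting is intrinsic to $P$. In short: your route is correct and conceptually transparent whenever the ergodic decomposition theorem is available, but to prove the theorem in the stated generality you must actually carry out the splitting argument, which is precisely what the paper does.
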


\begin {proof}The finiteness of the cardinality of $\Theta_*$ follows directly from the continuity of $\mathbb{V}$ and Remark \ref{continuity of capacity}.  Now let us prove  $\Theta_0=\textmd{co} \ \Theta_*$. For $P\in\Theta_0$ which is ergodic, we get the desired result. Otherwise, we can choose a set $A\in\mathcal{I}$ such that $0<P(A)<1$. Set $A_0=A$, $A_1=A^c$, which is called a nontrivial decomposition of $\Omega$. If $A_{\alpha}, \alpha\in \{0, 1\}$ does not have a nontrivial decomposition, the procedure halts. Otherwise, we have a decomposition $A_{\alpha}=A_{\alpha0}+A_{\alpha1}$ for $A_{\alpha0}, A_{\alpha1}\in \mathcal{I}$ with $0<P(A_{\alpha0}), P(A_{\alpha1})<P(A_{\alpha})$. Then, we repeat the above procedure for $A_{\alpha}, \alpha\in \{0,1\}^n$, $n\ge2$.

We claim that the above procedure will halt after a finite number of steps. Otherwise, we shall get a sequence of disjoint sets $(B_n)\subset \mathcal{I}$ with $P(B_n)>0$, $n\ge1$. By the ergodicity of $\mathbb{V}$, we have $\mathbb{V}(B_n)=1$, $n\ge1$, which is impossible since $\mathbb{V}$ is continuous.

Therefore, we get a finite partition $\{B_k\}_{k=1}^n$ of $\Omega$ with $B_k\in\mathcal{I}$, $P(B_k)>0$, and none of  $B_k$ having a nontrivial decomposition. Then, $P_k(\cdot)=P(\cdot | B_k)$ is ergodic and absolutely continuous with respect to $P$, so we have $P_k\in\Theta_*$ by Lemma \ref {ABC}. Noting that \[P(C)=\sum_{k=1}^nP(B_k)P_k(C), \ \textmd{for} \ C\in\mathcal{F}, \] we get the desired result.
\end {proof}

\vspace{0.5cm}

Based on the above characterization, we can improve the ergodic theorem (\ref{ergodicresult1}) as follows:

\begin{theorem}\label{improve}
    Assume that $\mathbb{V}(A)=\sup_{P\in\Theta}P(A)$ is $T$-ergodic. Let $\Theta_0$ be the subset of $T$-invariant probabilities in $\Theta$. Then for any $P\in\Theta$ and any bounded random variable $\xi$, we have
    $$
            -\mathbb{E}^0[-\xi]\leq\lim_{n\to\infty}\frac{1}{n}\sum_{k=0}^{n-1}\xi(T^{k}\omega)\leq \mathbb{E}^0[\xi], \ \ P\mbox{-a.s.},
    $$ where $\mathbb{E}^0$ is the upper expectation with respect to $\Theta_0$.
\end{theorem}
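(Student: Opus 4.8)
The plan is to reduce the statement to the classical Birkhoff theorem applied to the finitely many ergodic measures furnished by Theorem \ref{main result}. First I would note that it suffices to prove the upper bound
$$\limsup_{n\to\infty}\frac{1}{n}\sum_{k=0}^{n-1}\xi(T^k\omega)\le\mathbb{E}^0[\xi],\quad P\text{-a.s.};$$
the lower bound then follows by applying this inequality to $-\xi$. Since the limit already exists $P$-a.s.\ by the ergodic theorem of \cite{ergodictheorem}, the two bounds together give the assertion with $\lim$ in place of $\limsup$.

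Next I would set $\xi^*(\omega)=\limsup_{n}\frac{1}{n}\sum_{k=0}^{n-1}\xi(T^k\omega)$ and check its invariance: since $\xi$ is bounded, $\frac{1}{n}|\xi(T^n\omega)-\xi(\omega)|\to0$, so $\xi^*(T\omega)=\xi^*(\omega)$ for every $\omega$ and $\xi^*$ is $\mathcal{I}$-measurable. Hence the event $A=\{\xi^*>\mathbb{E}^0[\xi]\}$ belongs to the invariant algebra $\mathcal{I}$. By Lemma \ref{Lambda-inva} there is an invariant $P'\in\Theta_0$ with $P=P'$ on $\mathcal{I}$, so $P(A)=P'(A)$ and it remains only to show $P'(A)=0$.

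Here I would bring in the structure theorem. By Theorem \ref{main result}, $\Theta_*=\{Q_1,\dots,Q_m\}$ is finite and $\Theta_0=\text{co}\,\Theta_*$, so $P'=\sum_{i=1}^m\lambda_iQ_i$ with $\lambda_i\ge0$ and $\sum_i\lambda_i=1$. Because $Q\mapsto E_Q[\xi]$ is affine, $\mathbb{E}^0[\xi]=\sup_{Q\in\Theta_0}E_Q[\xi]=\max_{1\le i\le m}E_{Q_i}[\xi]$, so $E_{Q_i}[\xi]\le\mathbb{E}^0[\xi]$ for every $i$. Each $Q_i$ is a genuinely $T$-ergodic probability, whence the classical Birkhoff theorem yields $\xi^*=E_{Q_i}[\xi]$ $Q_i$-a.s.; in particular $Q_i(A)=Q_i(\xi^*>\mathbb{E}^0[\xi])=0$. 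Summing gives $P'(A)=\sum_i\lambda_iQ_i(A)=0$, which completes the argument.

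I expect the only real obstacle to be the transfer of the bound from the ergodic measures $Q_i$, where Birkhoff pins the time average to the constant $E_{Q_i}[\xi]$, to an arbitrary $P\in\Theta$ that need not be $T$-invariant. This is precisely where the two structural inputs enter: the $\mathcal{I}$-measurability of $\xi^*$ combined with Lemma \ref{Lambda-inva} replaces $P$ by an invariant $P'$ without changing $P(A)$, and the finite convex decomposition $\Theta_0=\text{co}\,\Theta_*$ of Theorem \ref{main result} collapses the Choquet bound $C_{\mathbb{V}}[\xi^*]$ appearing in (\ref{ergodicresult1}) into the sharper finite maximum $\max_iE_{Q_i}[\xi]=\mathbb{E}^0[\xi]$.
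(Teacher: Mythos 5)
Your proposal is correct and takes essentially the same approach as the paper: reduce to an invariant $P'\in\Theta_0$ via the $\mathcal{I}$-measurability of $\xi^*$, $\xi_*$ and Lemma \ref{Lambda-inva}, then use Theorem \ref{main result} to decompose $P'$ over the finitely many ergodic measures and apply the classical Birkhoff theorem. The only cosmetic difference is that the paper phrases the final step via the supporting partition $\{D_i\}$ and the identity $E_{P}[\xi|\mathcal{I}]=\sum_{i=1}^{m}E_{P_{i}}[\xi]1_{D_{i}}$, whereas you apply Birkhoff componentwise to each ergodic $Q_i$ and sum the convex combination --- the same argument in a slightly different dress.
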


\begin{proof} Since $\xi^*(\omega)=\limsup\limits_{n\to\infty}\frac{1}{n}\sum_{k=0}^{n-1}\xi(T^{k}(\omega))$ and $\xi_*(\omega)=\liminf\limits_{n\to\infty}\frac{1}{n}\sum_{k=0}^{n-1}\xi(T^{k}(\omega))$ are both $\mathcal{I}$-measurable, by Lemma \ref {Lambda-inva}, it suffices to prove that, for any $P\in\Theta_0$, \[P\big\{-\mathbb{E}^0[-\xi]\le \xi_*=\xi^*\le \mathbb{E}^0[\xi]\big\}=1.\]
Let $\Theta_*=\{P_{i}\}_{i=1}^{m}$ and let $\Omega=D_1\cup\cdots\cup D_m$ be the $\mathcal{I}$-measurable partition such that $P_{i}(D_{i})=1,$ $i=1,\cdots,m$. By Theorem \ref{main result}, we have $E_{P}[\xi|\mathcal{I}]=\sum_{i=1}^{m}E_{P_{i}}[\xi]1_{D_{i}},$ $P$-a.s. Then, Birkhoff's ergodic theorem gives
    $$
      \lim_{n\to\infty}\frac{1}{n}\sum_{k=0}^{n-1}\xi(T^{k}\omega)=\sum_{i=1}^{m}E_{P_{i}}[\xi]1_{D_{i}}, \ \ P\mbox{-a.s.},
    $$
which implies the desired result.
\end{proof}

\begin {remark}
The above theorem improves the ergodic theorem in \cite{ergodictheorem} in two aspects.  Firstly, the upper expectation $\mathbb{E}^0[\xi]$ is strictly smaller  than the Choquet  integral $C_{\mathbb{V}}[\xi^{*}]$ generally. Secondly, for the general case, no explicit form of $\xi^*(\omega)=\limsup\limits_{n\to\infty}\frac{1}{n}\sum_{k=0}^{n-1}\xi(T^{k}(\omega))$ was given in \cite{ergodictheorem}.
\end {remark}

\section{Appendix}
\
\
\
\
Banach-Mazur limits (see \emph{Infinite Dimensional Analysis} by Aliprantis and Border, 2005 \cite{infiniteanalysis}) play an important role in the study of invariant capacities. Therefore, we shall give a brief introduction to it.

\begin{definition}\label{banachmazur}
    Let $\ell_{\infty}$ be the space of all bounded sequences. A positive linear functional $\Lambda: \ell_{\infty} \rightarrow \mathbb{R}$ is a Banach-Mazur limit if
    \begin{enumerate}
        \item $\quad \Lambda(\boldsymbol{e})=1$, where $\boldsymbol{e}=(1,1,1, \ldots)$,
        \item $\quad \Lambda\left(x_1, x_2, \ldots\right)=\Lambda\left(x_2, x_3, \ldots\right)$ for each $\left(x_1, x_2, \ldots\right) \in \ell_{\infty}$.
    \end{enumerate}
\end{definition}

\begin{lemma}
    If $\Lambda$ is a Banach-Mazur limit, then
$$
\liminf _{n \rightarrow \infty} x_n \leqslant \Lambda(x) \leqslant \limsup _{n \rightarrow \infty} x_n
$$
for each $x=\left(x_1, x_2, \ldots\right) \in \ell_{\infty}$. In particular, $\Lambda(x)=\lim\limits_{n \rightarrow \infty} x_n$ for each convergent sequence $x$ (so every Banach-Mazur limit is an extension of the limit functional).
\end{lemma}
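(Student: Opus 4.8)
The plan is to exploit the three defining properties of a Banach--Mazur limit---positivity, linearity, and shift invariance---together with the normalization $\Lambda(\boldsymbol{e})=1$, to sandwich $\Lambda(x)$ between $\liminf$ and $\limsup$. The first observation I would record is that $\Lambda$ is monotone: if $x\le y$ coordinatewise, then $y-x$ has nonnegative entries, so positivity gives $\Lambda(y-x)\ge 0$, and linearity yields $\Lambda(x)\le\Lambda(y)$. Combined with $\Lambda(\boldsymbol{e})=1$, this shows $\Lambda(c\,\boldsymbol{e})=c$ for every constant $c$, which serves as the reference value against which tails of $x$ are compared.

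The core step is the upper bound $\Lambda(x)\le\limsup_n x_n$. Writing $L=\limsup_n x_n$, for each $\varepsilon>0$ there is an index $N$ with $x_n\le L+\varepsilon$ for all $n\ge N$. Iterating the shift property $N-1$ times gives $\Lambda(x_1,x_2,\dots)=\Lambda(x_N,x_{N+1},\dots)$, so I may replace $x$ by its tail without changing the value of $\Lambda$. Since every entry of this tail is at most $L+\varepsilon$, monotonicity and the normalization give $\Lambda(x)=\Lambda(x_N,x_{N+1},\dots)\le\Lambda\big((L+\varepsilon)\boldsymbol{e}\big)=L+\varepsilon$. Letting $\varepsilon\downarrow 0$ yields $\Lambda(x)\le L$.

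The lower bound then follows by a sign flip: applying the upper bound to $-x$ gives $\Lambda(-x)\le\limsup_n(-x_n)=-\liminf_n x_n$, and linearity ($\Lambda(-x)=-\Lambda(x)$) converts this into $\Lambda(x)\ge\liminf_n x_n$. The final assertion is then immediate: when $x$ converges, $\liminf_n x_n=\limsup_n x_n=\lim_n x_n$, so the sandwich forces $\Lambda(x)=\lim_n x_n$. The argument is essentially routine; the only point requiring care is the reduction to a tail, where one must note that the shift property is being applied to a genuine element of $\ell_\infty$ (each tail of a bounded sequence is again bounded) and that finitely many iterations suffice because the index $N$ depends only on $\varepsilon$.
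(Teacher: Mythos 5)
Your proof is correct: monotonicity (from positivity plus linearity), the normalization $\Lambda(c\,\boldsymbol{e})=c$, finitely many iterations of the shift property to pass to the tail $(x_N,x_{N+1},\dots)$, and the sign flip $\Lambda(-x)=-\Lambda(x)$ for the lower bound are all valid, and together they yield the sandwich inequality and the extension-of-limit claim. The paper itself states this lemma without proof (it is quoted from Aliprantis and Border), and your argument is precisely the standard one that the cited reference uses, so it supplies exactly what the paper omits.
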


\begin {remark} \label {Rem-banachmazur}For any $x, y\in \ell_{\infty}$,
\[|\Lambda(x)-\Lambda(y)|\le\lim_n\sup_{k\ge n} |x_k-y_k|.\]
\end {remark}

\begin{theorem}
    Banach–Mazur limits exist.
\end{theorem}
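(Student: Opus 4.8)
The plan is to realize $\Lambda$ as a Hahn--Banach extension of the obvious functional on the constant sequences, controlled by a single sublinear majorant chosen so that the one domination inequality it supplies encodes \emph{all three} defining properties at once. The key object is the Ces\`aro--limsup functional
$$p(x)=\limsup_{n\to\infty}\frac{1}{n}\sum_{k=1}^{n}x_k,\qquad x=(x_1,x_2,\ldots)\in\ell_{\infty}.$$
First I would check that $p$ is a genuine sublinear functional on $\ell_{\infty}$: it is positively homogeneous, $p(\lambda x)=\lambda\, p(x)$ for $\lambda\ge 0$, and subadditive, $p(x+y)\le p(x)+p(y)$, the latter from $\limsup_n(a_n+b_n)\le\limsup_n a_n+\limsup_n b_n$; it is also finite, since $|p(x)|\le\|x\|_{\infty}$.

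Next I would seed the extension on the one-dimensional subspace $Y=\{\lambda\boldsymbol{e}:\lambda\in\mathbb{R}\}$ by setting $f(\lambda\boldsymbol{e})=\lambda$. Because $p(\lambda\boldsymbol{e})=\lambda$ for every $\lambda$, we have $f\le p$ on $Y$, so the analytic (dominated-extension) form of the Hahn--Banach theorem produces a linear functional $\Lambda:\ell_{\infty}\to\mathbb{R}$ extending $f$ and satisfying $\Lambda(x)\le p(x)$ for all $x\in\ell_{\infty}$.

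It then remains to read off the three requirements from the bound $\Lambda\le p$. Normalization is immediate: $\Lambda(\boldsymbol{e})=f(\boldsymbol{e})=1$. For positivity, if $x\ge 0$ coordinatewise then $p(-x)\le 0$, whence $\Lambda(x)=-\Lambda(-x)\ge-p(-x)\ge 0$. For shift-invariance, write $S$ for the shift $S(x_1,x_2,\ldots)=(x_2,x_3,\ldots)$ and use the telescoping identity
$$\frac{1}{n}\sum_{k=1}^{n}(x_k-x_{k+1})=\frac{x_1-x_{n+1}}{n}\longrightarrow 0$$
valid for bounded $x$; this gives $p(x-Sx)=p(Sx-x)=0$, and applying $\Lambda\le p$ to both $x-Sx$ and $Sx-x$ forces $\Lambda(x-Sx)=0$, i.e.\ $\Lambda(x)=\Lambda(Sx)$, which is property (2).

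The one genuinely delicate point, and the \emph{main obstacle}, is the design of $p$: it must be large enough to dominate $f$ on the seed subspace, yet small enough to vanish on every shift-difference $x-Sx$ and to be nonpositive on nonpositive sequences, since these are precisely the three facts that convert the single Hahn--Banach inequality $\Lambda\le p$ into normalization, shift-invariance, and positivity respectively. The Ces\`aro--limsup is the canonical functional satisfying all three constraints simultaneously; once $p$ is fixed, each verification above is routine.
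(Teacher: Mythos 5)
Your proof is correct. The paper itself does not prove this theorem --- it is quoted from the cited reference of Aliprantis and Border --- and your Hahn--Banach argument with the Ces\`aro--limsup majorant $p(x)=\limsup_n \frac{1}{n}\sum_{k=1}^n x_k$ is precisely the standard construction found there: domination by $p$ on the line $\mathbb{R}\boldsymbol{e}$, positivity from $p(-x)\le 0$ for $x\ge 0$, and shift-invariance from the telescoping identity are all verified correctly. One small bonus worth noting: since $\liminf_n x_n \le \limsup_n \frac{1}{n}\sum_{k=1}^n x_k \le \limsup_n x_n$, your construction also delivers the bounds $\liminf_n x_n \le \Lambda(x) \le \limsup_n x_n$ of the lemma preceding the theorem, so the same argument proves both statements of the appendix at once.
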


\renewcommand{\refname}{\large References}{\normalsize \ }

\end{document}